\date{}
\theoremstyle{definition}
\newtheorem{theorem}{Theorem}[section]
\newtheorem{lemma}[theorem]{Lemma}
\newtheorem{corollary}[theorem]{Corollary}
\title{Note on disjoint faces in simple topological graphs}
\author{Ji Zeng\thanks{Department of Mathematics, University of California San Diego, La Jolla, CA, 92093 USA. Supported by NSF grant DMS-1800746, ERC grant No.~882971 `GeoScape', and by the Erd\H{o}s Center. Email: {\tt jzeng@ucsd.edu}.}}
\begin{document}

\maketitle

\begin{abstract}
We prove that every $n$-vertex complete simple topological graph generates at least $\Omega(n)$ pairwise disjoint $4$-faces. This improves upon a recent result by Hubard and Suk. As an immediate corollary, every $n$-vertex complete simple topological graph drawn in the unit square generates a $4$-face with area at most $O(1/n)$. This can be seen as a topological variant of the Heilbronn problem for quadrilaterals. We construct examples showing that our result is asymptotically tight. We also discuss the similar problem for $k$-faces with arbitrary $k\geq 3$.
\end{abstract}

\section{Introduction}

A \emph{topological graph} is a graph drawn in the plane such that its vertices are represented by points, and its edges are represented by non-self-intersecting curves connecting the corresponding points. The curves are not allowed to pass through vertices different from their endpoints, and if two edges share an interior point, then they must properly cross at that point in common. A subgraph $H$ of a topological graph $G$ is said to be \emph{plane} if its edges do not cross. A \emph{$k$-face} generated by $G$ refers to the bounded open cell enclosed by a plane $k$-cycle in $G$. Here a $k$-cycle means a cycle of length $k$. We remark that a face can contain other vertices or parts of edges, and the boundary cycles of two disjoint faces can share vertices or edges. A topological graph is said to be \emph{simple} if each pair of its edges intersects at most once, either at a common endpoint or at a proper crossing point. It is easy to re-draw any topological graph with fewer crossings until it becomes simple (see e.g.~\cite{ringel1965sechsfarbenproblem}), hence the ``simple'' condition appears commonly, say, in the study of crossing numbers.

The main result of this paper is the following theorem. It improves the main result in a recent paper by Hubard and Suk \cite{HS} from a weaker bound of $\Omega(n^{1/3})$ in the same context.
\begin{theorem}\label{main}
    Every $n$-vertex complete simple topological graph generates at least $\Omega(n)$ pairwise disjoint $4$-faces.
\end{theorem}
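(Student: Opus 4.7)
My plan follows a ``backbone-and-bulges'' strategy: first locate a plane substructure of linear size in $G$, then use that backbone to carve the plane into $\Omega(n)$ localized cells, and finally harvest a 4-face inside each cell using the completeness of $G$. Concretely, I would invoke a known structural theorem guaranteeing that every $n$-vertex complete simple topological graph contains a plane subgraph $H$ with $\Omega(n)$ edges---for instance, a plane matching of linear size (e.g.\ Fulek--Ruiz-Vargas), or a plane path of linear length if such a tool is available. I would then order the vertices incident to $H$ by their planar layout along $H$ and partition them into $\Omega(n)$ groups of constantly many consecutive vertices, so that each group occupies its own region of the plane delimited by $H$.

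Next, for each group I would inspect the $K_c$ sub-drawing on its $O(1)$ vertices. Since simple drawings of $K_c$ admit only finitely many weak-isomorphism types (and this list is small for small $c$), a finite case analysis should produce in each type at least one plane 4-cycle; that 4-cycle bounds a 4-face of $G$. The hope is then that each such 4-face can be chosen to lie inside its group's designated region, so that the resulting $\Omega(n)$ 4-faces are automatically pairwise disjoint by virtue of the cells being pairwise essentially disjoint.

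The hard part is the \emph{localization} claim just above. A plane 4-cycle extracted from a $K_c$ sub-drawing may, \emph{a priori}, enclose vertices belonging to distant groups, in which case its bounded region would spill across cells and disjointness would fail. Overcoming this will require a careful choice among the plane 4-cycles available in each $K_c$ sub-drawing---possibly after enlarging $c$ to widen the pool of candidates and applying a pigeonhole argument---together with topological constraints inherited from the backbone $H$ (such as the fact that $H$ cannot cross a plane 4-cycle whose support lies off of $H$). Getting this localization to work robustly, so that a constant fraction of the groups survive, is presumably the technical heart of the argument and the source of the improvement from $\Omega(n^{1/3})$ in Hubard--Suk to the $\Omega(n)$ bound of Theorem~\ref{main}.
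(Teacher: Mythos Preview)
Your plan has a real gap at exactly the point you flag: the localization step. Neither a plane matching nor a plane path of linear size ``carves the plane into $\Omega(n)$ localized cells'' in any useful sense---a matching is a disjoint union of arcs and separates nothing, and a simple open arc (a path) also fails to separate the plane. So there are no ``designated regions'' to which the groups are confined, and the hope that a $4$-face extracted from a constant-size $K_c$ sub-drawing stays inside such a region has no topological basis. Enlarging $c$ does not help: for any fixed $c$, a plane $4$-cycle on four vertices of a $K_c$ sub-drawing can enclose an arbitrarily large portion of the rest of $G$. In short, the backbone approach gives you no control over where the $4$-faces sit, and the finite case analysis on $K_c$ cannot supply that control.

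The paper's argument is quite different and avoids this issue altogether. It is an extremal argument: take a \emph{maximal} collection $\mathcal{C}$ of pairwise disjoint $4$-cells (under a natural refinement order), let $H$ be the union of their boundary cycles, and augment $H$ to a spanning biconnected plane subgraph $H'$ on the same vertex set (Garc{\'\i}a--Pilz--Tejel). Now $H'$ genuinely cuts the plane into faces bounded by cycles. The key lemma (proved via Fulek--Ruiz-Vargas, essentially Lemma~3.4 of Hubard--Suk) is that any plane $k$-cycle with at least $6k$ interior vertices encloses a $4$-face; by maximality of $\mathcal{C}$, each face $f_i$ of $H'$ therefore has fewer than $6|f_i|$ interior vertices. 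Summing, $\sum_i|f_i|>n/7$, and since $\sum_i|f_i|=2e(H')\le 6v(H')$ by Euler, one gets $v(H)=v(H')>n/42$ and hence $|\mathcal{C}|\ge v(H)/4=\Omega(n)$. The point is that the $4$-faces are not constructed directly; maximality plus the ``a crowded cycle contains a $4$-face'' lemma forces the vertex set of $H$ to be large.
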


This gives us the following immediate corollary.
\begin{corollary}\label{Heilbronn_upper}
    Every $n$-vertex complete simple topological graph drawn in the unit square generates a $4$-face with area at most $O(1/n)$.
\end{corollary}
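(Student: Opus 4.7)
The plan is to deduce Corollary~\ref{Heilbronn_upper} directly from Theorem~\ref{main} by a one-line pigeonhole argument. First, I would apply Theorem~\ref{main} to obtain a collection $\mathcal{F}$ of pairwise disjoint $4$-faces of the given topological graph with $|\mathcal{F}| \geq cn$ for some absolute constant $c>0$.

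Since each face in $\mathcal{F}$ is a bounded open cell in the plane, and the whole drawing lies in the unit square, each face is contained in the unit square. The faces being pairwise disjoint as subsets of $\mathbb{R}^2$ means their areas are additive, so
\[
\sum_{F \in \mathcal{F}} \mathrm{area}(F) \;\leq\; 1.
\]

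By averaging (pigeonhole), there exists some $F \in \mathcal{F}$ with
\[
\mathrm{area}(F) \;\leq\; \frac{1}{|\mathcal{F}|} \;\leq\; \frac{1}{cn} \;=\; O(1/n),
\]
which is the desired bound. There is essentially no obstacle here: the only thing to verify is that ``pairwise disjoint $4$-faces'' in the statement of Theorem~\ref{main} really means disjoint as planar regions (so that their areas add up), which is exactly the definition of $k$-face given earlier in the introduction as a ``bounded open cell.''
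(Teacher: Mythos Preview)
Your proposal is correct and is exactly the immediate pigeonhole/averaging argument the paper has in mind; the paper itself does not write out a separate proof but simply calls the corollary ``immediate'' from Theorem~\ref{main}. Your check that the faces are disjoint bounded open regions inside the unit square is precisely what is needed.
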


We also show that Corollary~\ref{Heilbronn_upper}, and hence Theorem~\ref{main}, are asymptotically tight.
\begin{theorem}\label{Heilbronn_lower}
    For every integer $n \geq 1$, there is an $n$-vertex complete simple topological graph drawn in the unit square such that every face it generates has area at least $\Omega(1/n)$.
\end{theorem}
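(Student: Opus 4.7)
The plan is to exhibit an explicit complete simple topological drawing $G_n$ of $K_n$ in the unit square such that every face has area $\Omega(1/n)$. The guiding observation is that in any simple topological drawing of $K_n$, any two edges sharing a vertex intersect only at that vertex: by the simplicity condition they intersect at most once in total, and the shared endpoint already accounts for this single intersection. Hence every triple of vertices automatically spans a plane $3$-cycle, so there are $\binom{n}{3}$ plane $3$-cycles in $G_n$ whose enclosed faces must each have area at least $\Omega(1/n)$.

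This observation rules out any straight-line drawing: by the classical Heilbronn triangle problem, for any placement of $n$ points in the unit square, some triangle has area much smaller than $1/n$, so a straight-line $K_n$ must have some $3$-face of area $o(1/n)$. The construction must therefore use genuinely curved edges, exploiting the fact that the region enclosed by three curved edges between three vertices can be far larger than the straight-line triangle on those endpoints.

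I would use the following construction. Place the $n$ vertices in a small convex configuration (say, on a circle of small radius $\varepsilon$) near the center of the unit square, and draw each edge $v_iv_j$ as a curve that detours away from this cluster into a specific subregion of the unit square chosen as a function of $(i,j)$. The subregions are arranged so that (a) any two of the resulting curves cross at most once, making the drawing a simple topological graph, and (b) every plane cycle of $G_n$ bounds a region containing at least one ``elementary cell'' of area $\Omega(1/n)$ from a suitable partition of the square. Because the vertices themselves occupy only a tiny region, the areas of the faces are essentially determined by the extents of the outward detours rather than by the straight-line positions of the vertices, and this is what circumvents the Heilbronn obstruction.

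The main obstacle is the simultaneous realization of (a) and (b). For (a), one must design the edges' trajectories carefully so that no two cross more than once; this likely requires a combinatorial parameterization of the curves, for instance via circular arcs with specified tangent directions at the endpoints and a globally consistent angular ordering, followed by a case analysis of pairwise intersections. For (b), one estimates the enclosed area of each plane cycle using the partition induced by the edge trajectories and checks that every plane cycle must contain some elementary cell in its interior. I expect the technical heart of the proof to lie in (a), the verification of simplicity, with (b) following by elementary planar geometry once the construction is pinned down.
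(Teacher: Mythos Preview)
Your proposal is a plan rather than a proof: no explicit drawing is given, and both verifications (a) and (b) are deferred to an unspecified ``case analysis'' and ``elementary planar geometry''. The real gap is not just missing detail but a missing idea. To verify (b) you must control \emph{all} plane cycles of your drawing, and in an arbitrary simple drawing of $K_n$ there can be exponentially many of them with no usable description. Nothing in your detour scheme explains how you would enumerate or characterise the plane cycles, so the step ``every plane cycle contains an elementary cell'' has no handle.

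The paper's construction supplies exactly this handle by fixing the \emph{crossing pattern} in advance. It draws $K_n$ so that two edges $v_{i_1}v_{i_2}$ and $v_{i_3}v_{i_4}$ cross if and only if their index pairs interleave, i.e.\ the drawing is weakly isomorphic to the convex geometric graph $C_n$. In that pattern the plane cycles are precisely the monotone cycles $v_{i_1}\to v_{i_2}\to\cdots\to v_{i_k}\to v_{i_1}$ with $i_1<\cdots<i_k$, a one-line combinatorial fact. Concretely, the vertices lie on the horizontal midline of the square; the lower half carries nested semicircles that realise the convex crossing pattern, while in the upper half each vertex $v_i$ is surrounded by a fan of non-crossing convex ``petals'' sweeping over a fixed rectangle $R_i$ of area $1/(3n)$. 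Any monotone plane cycle on $v_{i_1}<\cdots<v_{i_k}$ then visibly encloses $R_{i_2},\dots,R_{i_{k-1}}$, and the area bound follows immediately. The point is that (a) and (b) are not independent obstacles to be attacked separately: choosing the right crossing pattern makes (b) trivial, and your proposal does not yet contain that choice.
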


The famous Heilbronn problem \cite{roth1951problem} asks to determine the smallest $h(n)$ such that every set of $n$ points in the unit square forms a triangle with area at most $h(n)$. This problem is still open despite many efforts through more than 70 years. The current best known bounds for $h(n)$ are \begin{equation*}
    \Omega \left( \frac{\log n}{n^2} \right) < h(n)< O\left( \frac{1}{n^{7/6 - o(1)}} \right)
\end{equation*} due to Koml{\'o}s--Pintz--Szemer{\'e}di \cite{komlos1982lower} and Cohen--Pohoata--Zakharov \cite{cohen2024lower} respectively.

We call a topological graph \emph{geometric} if its edges are straight line segments. With this notion, the Heilbronn problem can be reformulated as follows: What is the smallest $h(n)$ such that every $n$-vertex complete geometric graph drawn in the unit square generates a $3$-face with area at most $h(n)$? Therefore, Corollary~\ref{Heilbronn_upper} and Theorem~\ref{Heilbronn_lower} can be seen as answers to a topological variant of the Heilbronn problem for quadrilaterals.

It is natural to consider the topological Heilbronn problem for triangles. However, Hubard and Suk \cite{HS} pointed out that we cannot expect a similar phenomenon for $k$-faces when $k$ is odd.
\begin{theorem}[Hubard--Suk]\label{Heilbronn_odd}
    For every integer $n \geq 1$ and every real number $\epsilon > 0$, there is an $n$-vertex complete simple topological graph drawn in the unit square such that every odd face it generates has area at least $1-\epsilon$.
\end{theorem}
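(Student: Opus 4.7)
The plan is to construct an explicit simple topological drawing of $K_n$ in the unit square $Q=[0,1]^2$ in which every plane odd cycle bounds a face of area at least $1-\epsilon$. The idea is to cluster all vertices in a tiny disc near the centre of $Q$ and to route most of the edges as long curves that wind once around this cluster while hugging $\partial Q$. Then the Jordan curve formed by any plane odd cycle must wind around the cluster, forcing its bounded face to contain almost all of $Q$.

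Concretely, I would fix a small disc $D\subset Q$ of area at most $\epsilon/3$ centred at a point $p$, place the $n$ vertices inside $D$, and split the edges of $K_n$ into \emph{short} edges, namely those incident to a fixed vertex $v_1$ (drawn entirely inside $D$ as non-crossing segments), and \emph{long} edges, namely the $\binom{n-1}{2}$ edges among $\{v_2,\ldots,v_n\}$, each drawn in the annulus $A=Q\setminus D$ so that it winds once around $D$ while staying within an $\epsilon/3$-neighbourhood of $\partial Q$. The key structural property is that every cycle of $K_n$ visits $v_1$ at most once and therefore contains either $0$ or $2$ short edges; in particular, every odd cycle contains an odd number of long edges.

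Now let $C$ be a plane cycle of odd length. Its edges are pairwise non-crossing, so their concatenation is a Jordan curve $\gamma_C$. Each long edge contributes a signed winding of $\pm 1$ around $p$, and each short edge contributes $0$; hence the winding of $\gamma_C$ about $p$ is a sum of $\pm 1$'s indexed by the long edges of $C$, whose parity equals the (odd) number of long edges. Since $\gamma_C$ is a Jordan curve, its winding about any point off the curve lies in $\{-1,0,+1\}$, and the parity constraint forces it to equal $\pm 1$. Therefore $p$ lies in the bounded face enclosed by $\gamma_C$. Because $\gamma_C$ is contained in the union of $D$ and an $\epsilon/3$-neighbourhood of $\partial Q$, a set of area at most $2\epsilon/3$, the bounded face containing $p$ has area at least $1-\epsilon$.

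The hard part is realizing the long edges as a simple drawing: two curves in $A$ that each wind once around $D$ generically cross twice, once on each side of the cluster, which would violate the at-most-one-crossing condition. I would handle this by giving each long edge its own radial depth in $A$ and drawing it as a nearly-radial segment from its endpoint on $\partial D$ out to the chosen depth, a circular arc at that depth, and another nearly-radial segment back; by spreading the depths and slightly perturbing the radial exit directions at each $v_i$, pairs of long edges would be nested rather than interleaved and meet at most once. The combinatorial verification that this gives a valid simple topological drawing of $K_{n-1}$ in the annulus with every edge of winding one is the technical core of the argument. The winding-parity argument above is insensitive to the signs of the individual winding contributions, so clockwise and counterclockwise long edges may be freely mixed to facilitate the simplicity analysis.
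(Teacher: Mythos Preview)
The paper does not prove this theorem; it is quoted from Hubard and Suk \cite{HS} and given no proof here, so there is no argument in the paper to compare yours against. I can only assess your proposal on its own merits.

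Your winding-parity idea is the right engine: if every edge of a simple drawing of $K_n$ winds once around a fixed point $p$, then any plane odd cycle, being a Jordan curve, has winding number $\pm1$ about $p$ and therefore encloses $p$. That part is sound. The difficulties are exactly where you flag them, and they are not merely technical.

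First, your ``nested radii'' realization of the long edges does not give a simple drawing. If $e_1$ lies at radius $r_1$ and $e_2$ at radius $r_2>r_1$, and each winds once around $D$, then the circular portion of $e_1$ sweeps the full angular range, so each of the two radial legs of $e_2$ crosses it; you get two crossings between $e_1$ and $e_2$, not one. More structurally, if all long edges lie outside the tiny disc $D_0$ through the vertices, then for vertex-disjoint edges the parity of their intersection equals that of the corresponding chords inside $D_0$ (close each edge by its chord; two closed curves in the plane meet evenly). Hence the crossing pattern mod~$2$ is forced to be the convex one regardless of windings, and producing a simple drawing with that pattern in which every edge still winds once around $p$ requires a genuinely different construction than concentric loops; your sketch does not supply one.

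Second, your area argument does not go through as stated. The curve $\gamma_C$ is \emph{not} contained in $D\cup N_{\epsilon/3}(\partial Q)$: every long edge has radial legs that traverse the big intermediate annulus. Those radial legs slice that annulus into sectors, and crossing a single leg flips which side of the Jordan curve you are on; so the sectors alternate between the bounded and unbounded complementary regions of $\gamma_C$. Without further control you cannot conclude that the bounded face containing $p$ also contains almost all of the intermediate annulus, and hence you cannot conclude it has area $\ge 1-\epsilon$. A correct construction places the vertices (and all edges) within a thin neighbourhood of $\partial Q$, so that the entire curve $\gamma_C$ lies near the boundary and the large central region is genuinely untouched; your placement of vertices inside a central disc is what forces the problematic radial legs.
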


In particular, Theorem~\ref{Heilbronn_odd} tells us that it is possible for all odd faces to have a nonempty intersection. On the other hand, we can still prove the existence of many pairwise disjoint $k$-faces when $k$ is even.
\begin{theorem}\label{main_general}
    For any fixed even number $k \geq 6$, every $n$-vertex complete simple topological graph generates at least $(\log n)^{1/4-o(1)}$ pairwise disjoint $k$-faces.
\end{theorem}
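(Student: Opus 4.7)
My plan is to derive Theorem~\ref{main_general} from Theorem~\ref{main} by routing through a long plane cycle in $G$. The idea is first to find a plane cycle $C$ in $G$ of length roughly $(\log n)^{1/4 - o(1)}$, and then to use chords of $C$ that each span $k-1$ consecutive edges of $C$ to carve the interior of $C$ into a nested family of plane $k$-cycles, each bounding a $k$-face. Because the chords used are non-crossing and lie on a common side of $C$, the resulting $k$-faces are automatically pairwise disjoint.

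Concretely, the first step is to invoke a Ramsey-type structural result for complete simple topological graphs to produce a plane cycle $C = v_0 v_1 \cdots v_{\ell-1}$ of length $\ell = (\log n)^{1/4 - o(1)}$ in $G$. The exponent $1/4$ here reflects the currently best available bound for long plane sub-structures (such as plane cycles or plane paths with many chords available) in a simple drawing of $K_n$. Once $C$ is fixed, cut $C$ into $\lfloor \ell/(k-1)\rfloor$ consecutive arcs of $k-1$ edges, with endpoints $u_i = v_{i(k-1)}$. For each $i$, the edge $u_i u_{i+1}$ of $G$ (which exists since $G$ is complete) together with the arc of $C$ from $u_i$ to $u_{i+1}$ is a cycle of length $k$. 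A Ramsey-type pruning on these chords then extracts a sub-family of $\Omega(\ell/k) = (\log n)^{1/4 - o(1)}$ chords that lie on a common (say, inner) side of $C$ and are pairwise non-crossing; the corresponding $k$-cycles are plane and their interiors nest inside $C$ without overlap.

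The main obstacle is the first step: the exponent $1/4$ is inherited from the state of the art on long plane cycles in complete simple topological graphs, and any improvement there would transfer directly into Theorem~\ref{main_general}. The second ingredient, the Ramsey-type pruning of chords, is technically delicate but uses essentially standard arguments for extracting pairwise non-crossing edges in a simple topological graph (handling in turn which side of $C$ each chord lies on, and then thinning to a non-crossing sub-family). A secondary concern is that the disjointness required in Theorem~\ref{main_general} is of $k$-faces as open regions, not just of their combinatorial boundary cycles; this is handled by exploiting the nesting geometry of non-crossing chords inside $C$, which guarantees that the open regions cut off by different chords are disjoint.
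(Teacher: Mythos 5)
Your plan has a genuine gap, and in fact a parity sanity check shows it cannot work as stated. Nothing in your argument uses that $k$ is even: if it were correct, the same construction would produce many pairwise disjoint $k$-faces for odd $k$ as well. But Theorem~\ref{Heilbronn_odd} (Hubard--Suk) gives drawings in the unit square in which every odd face has area at least $1-\epsilon$, so no two odd faces are disjoint there. Concretely, the failure is in your second step. In a simple topological graph the crossing pattern of edges is not determined by the cyclic order of their endpoints, so the chord $u_iu_{i+1}$ may cross the interior edges of the arc of $C$ it is supposed to close off (it only avoids the two arc edges with which it shares an endpoint). In that case the cycle ``arc plus chord'' is not plane and bounds no $k$-face at all. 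This is exactly what happens in the twisted graph $T_m$, where an edge whose index interval strictly contains another's must cross it: there the chord over $k-1$ consecutive edges of a plane path crosses all the interior edges of that arc. The ``Ramsey-type pruning'' you invoke is also not a routine step: from $N$ chords with no control on their mutual crossings one cannot in general extract a linear-size pairwise non-crossing subfamily; a Ramsey argument would typically leave only about $\log N$ chords, which after starting from $\ell=(\log n)^{1/4-o(1)}$ would collapse the bound to roughly $\log\log n$.

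The paper takes a different route that confronts these issues head on. It applies the Suk--Zeng unavoidable-patterns theorem to find $m\geq(\log n)^{1/4-o(1)}$ vertices inducing a subgraph weakly isomorphic to the convex geometric graph $C_m$ or the twisted graph $T_m$, uses Gioan's theorem to transfer disjoint-face collections back from these two explicit models (checking that triangle mutations preserve collections of pairwise disjoint $k$-faces), and then exhibits $\Omega(m)$ disjoint $k$-faces in each model by hand. In the twisted case the plane $k$-cycles are not ``arc plus chord'' but a specific zig-zag on each block of $k$ consecutive vertices, and the disjointness proof counts crossings of a ray from the origin with each cycle --- an even number precisely because $k$ is even. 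That is where the parity hypothesis enters, and it is the ingredient your proposal is missing.
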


We conjecture that the bound of $(\log n)^{1/4-o(1)}$ in Theorem~\ref{main_general} can be improved to a bound of $\Omega(n)$. This will (if true) asymptotically match the counterpart given by Theorem~\ref{Heilbronn_lower}.

\section{Proof of Theorem~\ref{main}}

Our proof is based on a phenomenon of simple topological graphs discovered by Fulek and Ruiz-Vargas~\cite{fulek2013topological,ruiz2015empty}.

\begin{lemma}[Fulek--Ruiz-Vargas]\label{interior}
Let $G$ be a complete simple topological graph. If $H \subset G$ is a plane connected subgraph and $v\in G$ is a vertex not in $H$, then there exist at least two edges between $v$ and the vertices of $H$ that do not cross $H$.
\end{lemma}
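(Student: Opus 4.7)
My plan is to analyze the combinatorial and topological structure inside the face $F$ of the plane graph $H$ containing $v$, and reduce the lemma to a Jordan-curve-type argument along the boundary walk $\partial F$.

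The first step is a reduction: if an edge $vu$ does not cross any edge of $H$, then $u$ must lie on $\partial F$, since $vu$ starts inside $F$ and cannot leave $F$ without crossing some edge on $\partial F$. So it suffices to find two non-crossing edges among $\{vu : u \in B\}$ where $B = V(H) \cap \partial F$. Next, exploiting the simple topological graph condition (two edges sharing a vertex do not cross), the edge $vu$ can only possibly cross edges of $H$ not incident to $u$. Combining these facts, if $vu$ crosses $H$ at all, then its first crossing (starting from $v$) lies on an edge $e_u \subseteq \partial F$ not incident to $u$.

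The core of the argument is to consider the \emph{initial arcs} $\gamma_u \subseteq vu$ that run from $v$ to the first point of $\partial F$ encountered along $vu$ (or $\gamma_u = vu$ if $vu$ never crosses $\partial F$ before reaching $u$). By construction the arcs $\{\gamma_u : u \in B\}$ are pairwise disjoint except at $v$, lie in the closed disk $\overline{F}$, and form a planar spider centered at $v$; consequently their $\partial F$-endpoints appear along $\partial F$ in the same cyclic order in which the edges $vu$ leave $v$. For each ``bad'' $u \in B$ (i.e.\ $vu$ crosses $\partial F$) the endpoint of $\gamma_u$ lies in the interior of an edge $e_u$ of $\partial F$ not incident to $u$, so $u$ and $\mathrm{end}(\gamma_u)$ are distinct points of $\partial F$ separated by at least one vertex of $B$ on either side. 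Comparing the cyclic order of the points $u$ along $\partial F$ with the cyclic order of the points $\mathrm{end}(\gamma_u)$ along $\partial F$ (which coincides with the cyclic order at $v$), a parity/non-crossing argument on this matching-like correspondence yields that at most $|B|-2$ vertices can be bad, leaving at least two good edges.

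The main obstacle is handling a non-simple $\partial F$, which occurs whenever $H$ has cut vertices or bridges, because then $\partial F$ may revisit vertices and the two cyclic orders are not cyclic orderings of \emph{sets} on a circle. I would deal with this by lifting to a closed disk whose boundary maps onto $\partial F$ as a covering walk, so that $\partial F$ becomes a simple cycle in the lift and each visit to a boundary vertex is distinguished; the Jordan-curve argument above then applies verbatim inside this disk. A secondary option would be to induct on the number of cut vertices of $H$, reducing to the case where $\partial F$ is already a simple cycle.
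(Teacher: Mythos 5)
This lemma is not proved in the paper at all: it is imported verbatim from Fulek and Ruiz-Vargas \cite{fulek2013topological,ruiz2015empty}, where its proof is a substantial, multi-page argument. So the relevant question is whether your sketch could stand on its own, and it cannot: the central counting step is a genuine gap. Your setup (good edges must end on $\partial F$; edges $vu$, $vu'$ are disjoint away from $v$ because they share $v$ in a simple drawing; the first crossing of $vu$ with $H$ lies on an edge of $\partial F$ not incident to $u$) is all correct. But the claim that ``a parity/non-crossing argument'' on the initial arcs $\gamma_u$ forces at least two good vertices is false at the level of information you retain. Take $H$ a triangle $u_1u_2u_3$ with $v$ inside $F$. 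The configuration in which each $\gamma_{u_i}$ runs from $v$ to an interior point of the opposite side is a perfectly legal spider: three internally disjoint arcs from $v$ to three points, one per side, with the cyclic order of their endpoints matching the rotation at $v$. Nothing in your data rules out all three vertices being bad, yet the lemma asserts at most one can be. (Also note that the rotation of the edges around $v$ need not agree with the cyclic order of the vertices $u$ along $\partial F$, so the two cyclic orders you propose to compare are not canonically matched to begin with.)

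The information you have discarded --- what the edges $vu$ do \emph{after} their first crossing --- is exactly where the content of the lemma lives. For instance, in the triangle case one argues as follows: if $vu_1$ crosses $u_2u_3$ at $p_1$, then since $vu_1$ can cross $u_2u_3$ at most once and cannot cross the sides incident to $u_1$, its remainder is an arc in the outer face from $p_1$ to $u_1$; this arc separates the outer face, and one checks that the continuation of a second bad edge $vu_2$ (which may not cross $vu_1$, nor $u_2u_3$, nor re-enter $F$) would be trapped on the wrong side of that separation. This kind of analysis of the continuations, iterated over a general connected $H$ with non-simple boundary walks, is what makes the Fulek--Ruiz-Vargas proof genuinely difficult; the initial-arc spider alone does not suffice.
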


The following lemma is the key to our proof. It is essentially proven by Hubard and Suk~\cite{HS}.

\begin{lemma}\label{key}
Let $G$ be a complete simple topological graph and $C \subset G$ be a plane $k$-cycle. If the face $F$ enclosed by $C$ contains at least $6k$ vertices of $G$ in its interior, then $G$ generates a $4$-face that lies inside (and different from) $F$.
\end{lemma}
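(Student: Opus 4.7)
My plan is to prove Lemma~\ref{key} by induction on $k$, handling small $k$ as explicit base cases and using Lemma~\ref{interior} as the main engine in the inductive step.

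For the base cases, $k=4$ is trivial: the plane $4$-cycle $C$ itself bounds a $4$-face, namely $F$. For $k=3$, the hypothesis $m \geq 18 > 0$ provides an interior vertex $v$, and Lemma~\ref{interior} applied to $H = C$ yields two non-crossing edges from $v$ to $V(C) = \{u_1, u_2, u_3\}$, say $vu_a$ and $vu_b$. Writing $u_c$ for the remaining vertex of $C$, the path $u_a u_c u_b$ has length $2$, so $v u_a u_c u_b$ is a plane $4$-cycle in $G$ whose enclosed region is contained in $F$, as desired.

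For the inductive step $k \geq 5$, I pick any interior vertex $v$ of $F$ and apply Lemma~\ref{interior} with $H = C$, obtaining non-crossing edges from $v$ to vertices $u_{i_1}, \ldots, u_{i_d}$ (cyclically ordered on $C$) with $d \geq 2$. Writing $\ell_j$ for the length of the $C$-arc from $u_{i_j}$ to $u_{i_{j+1}}$, these edges partition $F$ into sub-faces $F_1, \ldots, F_d$, where $F_j$ is bounded by a plane cycle in $G$ of length $\ell_j + 2$ and $\sum_j \ell_j = k$. If some $\ell_j = 2$, then $F_j$ is immediately a $4$-face inside $F$. Otherwise every $\ell_j \in \{1\} \cup \{3, 4, \ldots\}$, and the plan is to find a sub-face $F_j$ with $4 \leq \ell_j + 2 < k$ whose interior contains at least $6(\ell_j + 2)$ vertices, and to apply the induction hypothesis there; a straightforward pigeonhole using $\sum_j m_j \geq 6k - 1$ handles most balanced splits.

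The main obstacle is the degenerate split in which some $\ell_j = 1$: then the corresponding $F_j$ is a $3$-face (useful only if its interior contains $\geq 18$ vertices, which reduces to the $k=3$ base case) while the complementary sub-face has boundary at least $k + 1$, too long for the naive induction. I expect to resolve this by iterating the procedure inside that large sub-face, picking further interior vertices and feeding their non-crossing edges into the growing plane subgraph, while tracking a potential that weighs the remaining interior vertex budget against the current boundary length. Since each iteration either yields a $4$-face outright, triggers the $k=3$ base on a $3$-face with $\geq 18$ interior vertices, or absorbs at least one new vertex into the cycle boundary while trimming at most a bounded number of interior vertices, the slack in the $6k$ threshold should be enough to force a non-degenerate split before the potential is exhausted. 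Formalizing this potential argument — and explaining why the Hubard--Suk version fails precisely on these small $\ell_j = 1$ configurations — is where I expect the technical work to concentrate.
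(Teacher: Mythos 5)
Your $k=3$ case is correct and matches the paper, but the other two parts have real problems. First, $k=4$ is not trivial in the way you claim: answering with $F$ itself makes the lemma useless where it is applied. In the proof of Theorem~\ref{main}, the lemma is invoked on a face $f_i$ of the plane graph $H'$ that may itself coincide with a $4$-cell of the maximal collection $\mathcal{C}$ and have many vertices of $G$ in its interior; to contradict maximality one needs a $4$-face \emph{properly} contained in $F$, i.e.\ one whose boundary uses an interior vertex. The paper's proof does exactly this work for $k=4$: it applies Lemma~\ref{interior} to an interior vertex $v$, splits $F$ into faces of sizes $\ell_1+2$ and $\ell_2+2$ with $\ell_1+\ell_2=4$, and either obtains two $4$-faces or reduces to a $3$-face containing a vertex or a $5$-face containing $23$ vertices.

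Second, your inductive step for $k\geq 5$ cannot close as stated. With the threshold $6\times(\text{boundary length})$, the sub-faces $F_1,\dots,F_d$ have thresholds summing to $\sum_j 6(\ell_j+2)=6k+12d\geq 6k+24$, while only $6k-1$ interior vertices remain after spending $v$; so the pigeonhole fails for \emph{every} split, not just the degenerate $\ell_j=1$ ones. For example, with $k=10$ and $\ell_1=\ell_2=5$ you would need $42$ vertices inside one of the two $7$-faces but can only guarantee $30$. The induction only works with a threshold that is additive under splitting, such as ``more than $6(k-4)$'' --- note $6(\ell_1-2)+6(\ell_2-2)=6(k-4)$ --- and this is precisely Lemma~3.4 of \cite{HS}, which the paper simply cites for $k\geq 5$ rather than reproving. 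Your proposed ``potential argument'' for the $\ell_j=1$ configurations is a statement of intent rather than a proof, and even if carried out it would still sit on top of the broken pigeonhole. In short: keep your $k=3$ argument, replace the $k=4$ shortcut with the actual splitting argument, and for $k\geq 5$ either cite \cite{HS} or rebuild the induction around a $6(k-4)$-type threshold.
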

\begin{proof}[Proof Sketch]
    When $k=3$, $F$ contains at least one vertex $v$ of $G$, and Lemma~\ref{interior} guarantees two edges from $v$ to $C$ that lie inside $F$, hence enclosing the desired $4$-face. This is already observed by Hubard and Suk, and combining this observation with a divide-and-conquer argument, they proved: if $k\geq 5$ and $F$ contains more than $6(k-4)$ vertices, then $G$ generates a $4$-face inside $F$ (Lemma~10 in \cite{HS}). When $k \geq 5$, their result implies what we want since $6k > 6(k-4)$.
    
    When $k=4$, we apply Lemma~\ref{interior} to any vertex $v$ of $G$ inside $F$, so we obtain two edges emanating from $v$ to $C$ that lie inside $F$. This divides $F$ into two faces $F_1$ and $F_2$. If $F_1$ and $F_2$ are $4$-faces, then we are done. Otherwise, we can assume without loss of generality that $F_1$ is a $3$-face and $F_2$ is a $5$-face. By our hypothesis and the pigeonhole principle, either $F_1$ contains $1$ vertex or $F_2$ contains $6k-1 = 23$ vertices, hence we finish the proof by reduction to previous cases.
\end{proof}

We need a fact by Garc{\'i}a, Pilz, and Tejel~\cite{GPT} whose proof also relies on Lemma~\ref{interior}.
\begin{lemma}[Garc{\'i}a--Pilz--Tejel]\label{maximalplane}
    In a complete simple topological graph, every plane subgraph is contained in another plane subgraph that is spanning and biconnected.
\end{lemma}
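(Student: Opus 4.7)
The plan is to prove the lemma by considering a plane subgraph $H$ of $G$ that contains $H_0$ and is maximal with respect to inclusion. Because adding an isolated vertex never introduces a crossing, any such $H$ is automatically spanning, so it suffices to show that $H$ is biconnected, which we split into establishing connectivity and then the absence of cut vertices.

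To show $H$ is connected, suppose for contradiction it has several components, and let $A, B$ be two of them. Picking any $v \in V(B)$ and applying Lemma~\ref{interior} to the plane connected subgraph $A$ and the vertex $v$, we obtain two edges $e_1, e_2$ from $v$ into $V(A)$ that do not cross $A$. The task is then to argue that at least one of $e_1, e_2$ can be chosen to also avoid $B$, so that $H \cup \{e_i\}$ remains plane and contradicts maximality. The idea is that near $v$, the edges $e_1, e_2$ emanate into faces of $B$ at $v$ without crossing any $v$-incident edge of $B$ (by simplicity), and if both edges were to cross $B$ away from $v$, then $e_1, e_2$ together with a suitable walk in $B$ near $v$ would form a closed Jordan curve cutting $V(A)$ into two nonempty parts, contradicting the connectedness of $A$.

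For biconnectivity, assume now that $H$ is connected but has a cut vertex $v$, and let $D_1, \ldots, D_s$ be the components of $H - v$. Let $H_1$ be the plane connected subgraph consisting of $D_1$, the vertex $v$, and the $H$-edges from $v$ into $D_1$. Picking $u \in V(D_j)$ for some $j \neq 1$ and applying Lemma~\ref{interior} to $H_1$ and $u$, we obtain two edges from $u$ into $V(H_1)$ avoiding $H_1$, at least one of which (by simplicity) lands at a vertex of $D_1 \setminus \{v\}$. An argument of the same flavor as above, with $D_2 \cup \cdots \cup D_s$ together with the relevant $v$-incident edges playing the role of $B$, lets us choose this edge to also avoid the rest of $H$. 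Adding it creates a $D_j$--$D_1$ path in the augmented graph that avoids $v$, contradicting both the cut-vertex property of $v$ and the maximality of $H$.

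The main obstacle is transferring the non-crossing guarantee of Lemma~\ref{interior}, which is stated only with respect to the specific connected plane subgraph supplied, to a non-crossing guarantee with respect to the entire $H$. This requires careful topological bookkeeping that essentially uses the simplicity of $G$: each pair of edges crosses at most once, and the planar embedding of the ``other side'' of $H$ constrains the hypothetical crossings enough to force a non-crossing choice to exist. Formalizing this Jordan-curve argument is where the bulk of the technical work lies.
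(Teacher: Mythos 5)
The first thing to note is that the paper does not prove this lemma at all: it is quoted as a known theorem of Garc\'ia, Pilz, and Tejel, whose actual proof (that maximal plane subgraphs of complete simple topological drawings are spanning and biconnected) is a long, case-heavy analysis. Your outline follows the natural frame that their proof also uses --- take a plane subgraph $H$ containing the given one that is maximal under inclusion, observe it is spanning, then try to get connectivity and biconnectivity out of Lemma~\ref{interior} --- but the step you yourself flag as ``where the bulk of the technical work lies'' is precisely the entire difficulty of the theorem, and the Jordan-curve idea you sketch for it does not work as stated.

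Concretely: Lemma~\ref{interior}, applied to the component $A$ and a vertex $v\in B$, yields two edges $e_1,e_2$ avoiding $A$ only; to contradict maximality you need an edge avoiding \emph{all} of $H$, i.e.\ $B$ and every other component as well (your sketch only discusses $B$). Your claim is that if both $e_1,e_2$ cross $B$ away from $v$, then pieces of $e_1,e_2$ together with a walk in $B$ form a closed curve separating $V(A)$, contradicting connectedness of $A$. This fails in simple configurations: let $B$ be a plane triangle $vu_1u_2$ with $A$ drawn in its outer face, and suppose $e_1$ and $e_2$ each leave $v$ into the interior of the triangle and cross the side $u_1u_2$ once before running to $A$ (this is compatible with simplicity, since $u_1u_2$ shares no endpoint with $e_1$ or $e_2$, while the sides $vu_1,vu_2$ cannot be crossed by them). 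The closed curve formed by the initial arcs of $e_1,e_2$ and the sub-arc of $u_1u_2$ between the two crossing points lies inside the closed triangular disk, while $A$ lies entirely outside it, so nothing of $A$ is separated and no contradiction arises. The alternative closed curve $e_1\cup e_2\cup P$ with $P\subset A$ a path joining the endpoints fares no better: $u_1u_2$ crosses it exactly twice, so $u_1$ and $u_2$ remain on one side. Ruling out such configurations --- that is, upgrading ``avoids the connected subgraph fed into Lemma~\ref{interior}'' to ``avoids the whole maximal plane subgraph'' --- is exactly what the Garc\'ia--Pilz--Tejel argument spends most of its effort on, and nothing in your proposal substitutes for it; the same gap recurs verbatim in your biconnectivity step, where the new edge must additionally avoid $D_2,\dots,D_s$ and the edges at $v$. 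So the proposal is an outline whose central step is both missing and, in the form sketched, false.
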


Our proof is inspired by a result of Aichholzer, Garc{\'i}a, Tejel, Vogtenhuber, and Weinberger~\cite{Graz}.

\begin{proof}[Proof of Theorem~\ref{main}]
Let $G$ be any $n$-vertex complete simple topological graph. A \textit{$4$-cell} refers to a (bounded) $4$-face generated by $G$ or an unbounded cell enclosed by a plane $4$-cycle of $G$. We consider a partial order on collections of pairwise disjoint $4$-cells of $G$. For two collections $\mathcal{C}_1$ and $\mathcal{C}_2$ of disjoint $4$-cells, we say $\mathcal{C}_1\preceq\mathcal{C}_2$ if $|\mathcal{C}_1| < |\mathcal{C}_2|$, or if $|\mathcal{C}_1| = |\mathcal{C}_2|$ and each $4$-cell in $\mathcal{C}_2$ is contained in some $4$-cell in $\mathcal{C}_1$. Let $\mathcal{C}$ be a maximal element with respect to this partial order.

Let $H$ be the subgraph of $G$ that consists of all the vertices and edges on the boundaries of the $4$-cells in $\mathcal{C}$. Notice that $H$ is plane. By Lemma~\ref{maximalplane}, we can augment $H$, by adding only edges of $G$, to another plane subgraph $H'\subset G$ such that $H'$ is biconnected and the vertex sets of $H$ and $H'$ coincide. Since $H'$ is plane and biconnected, it cuts the plane into cells $f_0,f_1,\dots,f_k$ each of whose boundaries is a plane cycle. Here, $f_0$ denotes the unbounded cell of this cutting.

Let $|f_i|$ be the number of vertices on the boundary of $f_i$, and $v(f_i)$ be the number of vertices in the interior of $f_i$. Suppose that $v(f_0) \geq 6|f_0|$, we consider $G$ as drawn on a sphere, choose a new pole outside $f_0$, and apply Lemma~\ref{key}, then we can conclude that $G$ generates a $4$-cell inside $f_0$. However, this contradicts $\mathcal{C}$ being maximal with respect to $\preceq$. Similarly, by the maximality of $\mathcal{C}$ and Lemma~\ref{key}, we must have\begin{equation*}
    v(f_i) < 6|f_i|.
\end{equation*} On the other hand, we have $\sum_i (|f_i|+v(f_i)) \geq n$, so $\sum_i |f_i| > n/7$.

Let $v(H')$ be the number of vertices of $H'$, and $e(H')$ be the number of edges of $H'$. We know that $2e(H') = \sum_i |f_i|$ by double-counting, so we have $e(H') > n/14$. Since $H'$ is plane, we have the classical bound $3v(H')-6 \geq e(H')$. Hence, we have $v(H)=v(H')> n/42$. Since each vertex of $H$ lies on the boundary of some cell in $\mathcal{C}$, and each cell in $\mathcal{C}$ has $4$ vertices on its boundary, \begin{equation*}
    |\mathcal{C}| \geq v(H)/4 > n/168.
\end{equation*} Notice that $\mathcal{C}$ contains at most one unbounded $4$-cell, and the others form a collection of pairwise disjoint $4$-faces. Therefore we conclude the proof.
\end{proof}

\section{Proof of Theorem~\ref{Heilbronn_lower}}

We describe a complete simple topological graph $D_n$ drawn in the unit square $[0,1] \times [0,1]$. Intuitively, $D_n$ is the outcome of this procedure: place $n$ vertices on a horizontal line, connect every pair by a lower semicircle, and ``inflate'' a small region to the lower left of each vertex. Our precise definition of $D_n$ is not optimized for the sake of simplicity, but one can easily make the area of every $k$-face arbitrarily close to $(k-2)/(n-2)$.

\begin{figure}[ht]
    \centering
    \includegraphics[scale=0.6]{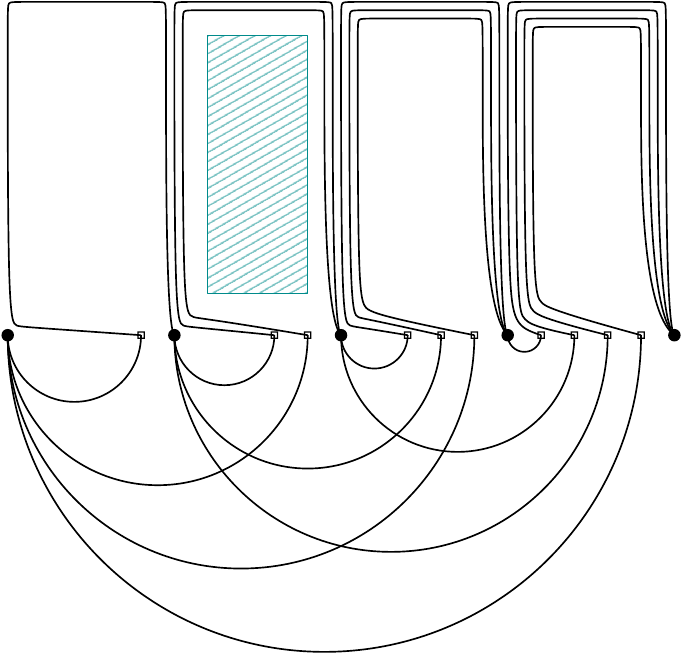}
    \caption{Illustration of $D_5$. The small square nodes represent the auxiliary points $v_{ij}$. The shaded rectangle represents one of the regions $R_i$.}
    \label{D5}
\end{figure}

We place $n$ vertices $v_i = (i/n , 1/2)$ for each $1\leq i\leq n$ on the equator of the unit square. We also pick auxiliary points $v_{ij} = (\frac{i}{n} - \frac{j}{n^2}, \frac{1}{2})$ for $1\leq j < i \leq n$. Inside the rectangular region $(\frac{i-1}{n},\frac{i}{n}) \times (\frac{1}{2},1)$, we draw a curve $c_{ij}$ from $v_i$ to $v_{ij}$ such that $c_{ij}$, together with the straight line segment between its endpoints, encloses a smaller region $R_i$ with area at least $\Omega(1/n)$. Moreover, these curves can be drawn without any crossings. See Figure~\ref{D5}. Now, for each $1\leq j < i \leq n$, we draw the edge $v_{i}v_{j}$ by concatenating the curve $c_{ij}$ (which ends at $v_{ij}$) and the lower semicircle from $v_{ij}$ to $v_{j}$. Since the crossings of edges only happen between their semicircle parts, we can check that this topological graph is simple hence finish the definition of $D_n$. It also follows from the crossing pattern of semicircles that two edges $v_iv_j$ ($i<j$) and $v_kv_{\ell}$ ($k<\ell$) cross in $D_n$ if and only if $i < k < j < \ell$ or $k < i< \ell < j$.

\begin{proof}[Proof of Theorem~\ref{Heilbronn_lower}]
    Let $C$ be a plane $k$-cycle in $D_n$. Using the crossing pattern of $D_n$, we can check that $C$ can be written as $v_{i_1} \to v_{i_2} \to \dots \to v_{i_k} \to v_{i_1}$ for some $i_1<i_2<\dots<i_k$. Therefore, the face enclosed by $C$ must contain the rectangles $R_{i_2}, R_{i_3},\dots, R_{i_{k-1}}$ each of whose areas is at least $\Omega(1/n)$, concluding the proof.
\end{proof}

\section{Proof of Theorem~\ref{main_general}}

In this proof, we shall consider two prototypes of simple topological graphs: convex geometric graphs and twisted graphs.

The complete \emph{convex geometric graph} $K_m$ is a complete simple topological graph whose vertices are $m$ points in convex position, and edges are straight line segments. We remark that any complete geometric graph must have its vertices in general position (i.e. no collinear triples). The vertices of $K_m$ can be labelled as $v_1,v_2,\dots, v_m$, such that two edges $v_iv_j$ ($i<j$) and $v_kv_{\ell}$ ($k<\ell$) cross if and only if $i < k < j < \ell$ or $k < i< \ell < j$. This means $K_m$ has the same crossing pattern as $D_m$ in the previous section.

The complete \emph{twisted graph} $T_m$ is a complete simple topological graph whose vertices can be labelled as $v_1,v_2,\dots, v_m$, such that two edges $v_iv_j$ ($i<j$) and $v_kv_{\ell}$ ($k<\ell$) cross if and only if $i < k < \ell < j$ or $k < i< j < \ell$. Specifically, for $j$ ranging from $1$ to $m$, we let $v_j$ be the point $(j,0)$ on the $x$-axis and draw the edges $v_iv_j$ ($i<j$) as follows: for $i$ ranging from $1$ to $j-1$, we draw a convex curve $c_{ij}$ from $v_j$ to a point $u_{ij}$ on the $y$-axis; we make sure the interior of the convex hull of $c_{ij}$ contains the plane origin, the points $v_1,v_2,\dots,v_{j-1}$, and all the previously drawn edges; we draw the edge $v_iv_j$ by concatenating the curve $c_{ij}$ and the line segment from $u_{ij}$ to $v_i$. See Figure~\ref{twisted}. We remark that Theorem~\ref{Heilbronn_odd} is proven by ``inflating'' the plane origin in the drawing of $T_m$.

    \begin{figure}[ht]
        \centering
        \includegraphics{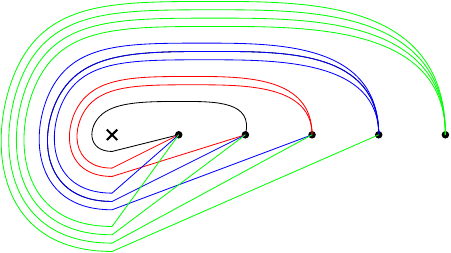}
        \caption{Illustration of $T_5$. The cross represents the origin of the plane.}
        \label{twisted}
    \end{figure}

Our proof is based on a Ramsey-type result on the crossing patterns of simple topological graphs by Suk and Zeng \cite{suk2022unavoidable} (see also \cite{pach2003unavoidable}). Two simple topological graphs $G$ and $H$ are said to be \emph{weakly isomorphic} if there is an incidence preserving bijection between $G$ and $H$ such that two edges of $G$ cross if and only if the corresponding edges in $H$ cross as well.

\begin{theorem}[Suk--Zeng]\label{unavoidable}
Every $n$-vertex complete simple topological graph has an induced subgraph on $m\geq (\log n)^{1/4 - o(1)}$ vertices that is weakly isomorphic to either $K_m$ or $T_m$.
\end{theorem}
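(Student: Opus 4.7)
The plan is to prove Theorem~\ref{unavoidable} by extracting structure from an arbitrary $n$-vertex complete simple topological graph $G$ via a sequence of Ramsey-theoretic reductions, using topological constraints of simplicity to go beyond what naive hypergraph Ramsey would give.

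First, I would fix a canonical linear order on the vertex set. Considering $G$ as drawn on the sphere, pick a vertex $v$ on the outer face of some plane spanning subgraph (which exists by Lemma~\ref{maximalplane}), puncture the sphere near $v$, and use the cyclic order of the edges emanating from $v$ to induce a linear order $v_1 < v_2 < \cdots < v_{n-1}$ on the remaining vertices. This ordering is the natural analog of the vertex ordering in both $C_m$ (vertices in convex position) and $T_m$ (vertices on a line), and puts the whole problem into the ordered-graph regime where Ramsey techniques are most effective.

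Next, for each ordered $4$-tuple $a<b<c<d$, I would encode its \emph{crossing type} as the triple of booleans recording which of the three vertex-disjoint pairs $(v_av_b, v_cv_d)$, $(v_av_c, v_bv_d)$, $(v_av_d, v_bv_c)$ actually cross in $G$. The $C_m$ pattern is the type in which only the interleaved pair $(v_av_c, v_bv_d)$ crosses, and the $T_m$ pattern is the type in which only the nested pair $(v_av_d, v_bv_c)$ crosses. A direct application of Ramsey to this $8$-coloring of $4$-tuples would yield only a $\log^{(4)} n$ monochromatic subset---far too weak.

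To obtain the polynomial-in-$\log n$ bound, I would combine a bounded-depth Ramsey reduction with two rounds of Erd\H{o}s--Szekeres-type monotone extraction. Concretely: apply a modest-depth Ramsey step on short-edge crossings (those of the form $v_iv_{i+1}$ against other edges) to fix a ``base pattern'' on a subset of size $\Omega(\log n)$; then iterate twice a monotonization step that, viewing the crossing data as a function of a single sliding endpoint, loses a square-root factor in size at each stage ($\log n \to (\log n)^{1/2} \to (\log n)^{1/4}$) and leaves the crossing data governed by a single monotone ``slope.'' A final topological case analysis---using simplicity and Lemma~\ref{interior} to eliminate all crossing types except those realized by $C_m$ and $T_m$---completes the argument.

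The main obstacle is this last topological step. Showing that only the $C_m$ and $T_m$ patterns survive after the Ramsey reductions requires a delicate case analysis, exploiting the fact that in a simple topological graph two edges cross at most once, together with Lemma~\ref{interior}-style arguments that force the presence of plane subgraphs inside a small number of faces. Identifying the right monotone invariants so that the Erd\H{o}s--Szekeres stages preserve this structure---and thereby reaching the target exponent $1/4$ rather than suffering a tower-type loss---is where the bulk of the technical work will lie.
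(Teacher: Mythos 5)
The first thing to note is that the paper does not prove this statement at all: Theorem~\ref{unavoidable} is quoted, with attribution, from Suk and Zeng \cite{suk2022unavoidable}, where its proof occupies a separate, substantial paper. Your proposal therefore has to be judged as an attempt to reprove that external result from scratch, and it falls well short of doing so: it is a research plan whose decisive steps are left open, as you yourself concede ("where the bulk of the technical work will lie"). The parts you get right are the easy ones: in a simple drawing adjacent edges never cross, so the weak isomorphism type of a complete simple topological graph is determined by the crossing data on $4$-tuples; the convex pattern is the one where only the interleaved pair $(v_av_c,v_bv_d)$ crosses and the twisted pattern is the one where only the nested pair $(v_av_d,v_bv_c)$ crosses; and a direct application of hypergraph Ramsey to this bounded coloring of $4$-tuples yields only iterated-logarithmic bounds, far too weak.

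Two gaps are concrete. First, your claim that a "modest-depth Ramsey step on short-edge crossings" produces a base pattern on $\Omega(\log n)$ vertices is unsupported and, as stated, implausible: whether $v_iv_{i+1}$ crosses $v_jv_k$ is a relation on (at least) triples of vertices, and bounded-color Ramsey on triples guarantees only sets of size exponentially smaller than $\log n$ (of order $\log\log n$); to reach $\Omega(\log n)$ you would need to reduce the crossing structure to a bounded coloring of \emph{pairs}, and no such reduction is described. Escaping exactly this tower-type loss is the central difficulty of the theorem, and it is why the Suk--Zeng argument is structural rather than purely Ramsey-theoretic. Second, the "final topological case analysis" that should eliminate every crossing type except the convex and the twisted one, together with the "right monotone invariants" that should survive your two Erd\H{o}s--Szekeres stages, are never identified; these are not finishing touches but the actual mathematical content of the theorem (already in the weaker $\Omega(\log^{1/8}n)$ bound of Pach, Solymosi, and T\'oth, the work lies in ruling out the other canonical types, e.g.\ the all-plane type via the non-planarity of $K_5$). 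Lemma~\ref{interior} is a statement about a vertex and a plane connected subgraph, and gives no evident handle on these eliminations. In short, your skeleton is reasonable intuition for why an exponent like $1/4$ could arise from two square-root losses, but there is no proof here.
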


\begin{proof}[Proof of Theorem~\ref{main_general}]
    Let $G$ be any $n$-vertex complete simple topological graph. By Theorem~\ref{unavoidable}, there are $m \geq (\log n)^{1/4 - o(1)}$ vertices of $G$ inducing a subgraph $H$ that is weakly isomorphic to either the convex geometric graph $K_m$ or the twisted graph $T_m$.

    Suppose a simple topological graph $H'$, weakly-isomorphic to $H$, generates a collection $\mathcal{C}$ of pairwise disjoint $k$-faces, we claim that $H$ also generates at least $|\mathcal{C}| - 1$ pairwise disjoint $k$-faces. Indeed, the boundary cycles appearing in $\mathcal{C}$ must not cross each other in the drawing of $H'$, hence the same holds in $H$ by weak isomorphism. Moreover, for the boundary cycle $C$ of an arbitrary element in $\mathcal{C}$, all vertices appeared in $\mathcal{C}$, other than those of $C$, are in the same side of $C$ in the drawing of $H'$. The same property also holds in $H$, because whether $u$ and $v$ are on the same side depends on the parity of the number of crossings between the curve $uv$ and the cycle $C$, and this quantity is preserved under weak isomorphism. Therefore, every such cycle $C$ encloses a cell in $H$ not containing any vertices appearing in $\mathcal{C}$. At most one such cell can be unbounded, and the remaining form pairwise disjoint faces.

    It suffices for us to show that both $K_m$ and $T_m$ generate $\Omega(m)$ many pairwise disjoint $k$-faces. For the convex geometric graph $K_m$, we can label its vertices as $v_1,v_2,\dots,v_m$ in a non-decreasing order of
    their $x$-coordinates. Then we divide the vertex set into $\Omega(m)$ many $k$-tuples \begin{equation*}
        \tau_i=\{ v_{(i-1)k+1},\ v_{(i-1)k+2},\ \dots,\ v_{ik} \} \quad \text{for} \quad 1\leq i\leq \lfloor m/k \rfloor.
    \end{equation*} Because $K_m$ is a convex geometric graph, the interior of the convex hull of each $\tau_i$ is a $k$-face. Furthermore, the projections of these $k$-faces on the $x$-axis are pairwise disjoint open intervals, so $K_m$ generates $\Omega(m)$ many pairwise disjoint $k$-faces.

    For the twisted graph $T_m$, we recall, from the beginning of this section, that the vertices of $T_m$ are labelled as $v_1,v_2,\dots,v_m$. We divide the vertex set into $\Omega(m)$ many $k$-tuples $\tau_i$ as we did for $K_m$. For a particular $k$-tuple (for example) $\tau_1$, we define its corresponding cycle\begin{equation*}
        C_1 = v_{1}\to v_{\frac{k}{2}+1}\to v_{k}\to v_{\frac{k}{2}}\to v_{k-1}\to v_{\frac{k}{2}-1}\to v_{k-2}\to v_{\frac{k}{2}-2}\to \dots\to v_{\frac{k}{2}+2}\to v_{2}\to v_{1}.
    \end{equation*} Using the crossing pattern of $T_m$, we can check that $C_1$ is plane and hence it encloses a $k$-face $F_1$. Similarly we can define the cycle $C_i$ and the face $F_i$ according to $\tau_i$ for all $1\leq i\leq \lfloor m/k \rfloor$. We claim that any two faces $F_i$ and $F_j$ ($i<j$) are disjoint. Indeed, we use $R$ to denote the convex hull of the edge $v_{ik}v_{(j-1)k+1}$. By the process of drawing $T_m$, the induced subgraph on $\tau_i$ is inside $R$, and the induced subgraph on $\tau_j$ is outside $R$. As a consequence, $F_i$ is contained in $R$, and $F_j$ either contains $R$ or does not intersect $R$. Notice that the negative portion of the $x$-axis is a ray emanating from the plane origin that crosses $C_j$ an even number of times. This means the plane origin is outside the face $F_j$ enclosed by $C_j$. However, the plane origin is contained inside $R$. So $F_j$ does not intersect $R$ and hence is disjoint from $F_i$. Therefore, we conclude that $T_m$ generates $\Omega(m)$ many pairwise disjoint $k$-faces.
\end{proof}

\medskip

\noindent {\bf Acknowledgement.} I wish to thank Andrew Suk for reading an early draft, Alexandra Weinberger for discussions at SoCG 2023, and anonymous referees for several suggestions. This paper is dedicated to the memory of Emily Zhu.

\bibliographystyle{abbrv}
{\footnotesize\bibliography{main}}

\end{document}